\theoremstyle{plain}
\newtheorem{theorem}{Theorem}[section]
\newtheorem{prop}[theorem]{Proposition}
\newtheorem{cor}{Corollary}[theorem]
\theoremstyle{definition}
\newtheorem{definition}{Definition}[section]
\newtheorem{remark}{Remark}[section]
\newtheorem{example}{Example}[theorem]
\numberwithin{equation}{section}
\begin{document}

\title[Smoothness and norm attainment of bilinear operators]{Smoothness and norm attainment of bounded bilinear operators between Banach spaces}
\author[Sain]{Debmalya Sain}

\newcommand{\acr}{\newline\indent}

\address[Sain]{Department of Mathematics\\ Indian Institute of Science\\ Bengaluru 560012\\ Karnataka \\India\\ }
\email{saindebmalya@gmail.com}


\thanks{The research of the author is sponsored by Dr. D. S. Kothari Postdoctoral Fellowship under the mentorship of Professor Gadadhar Misra. The author feels elated to acknowledge the delightful and inspiring company of his beloved fellow researcher Dr. Chandrodoy Chattopadhyay.}

\subjclass[2010]{Primary 46G25, Secondary 46B20}
\keywords{bilinear operators; smoothness; norm attainment}

\begin{abstract}
We study the smoothness and the norm attainment of bounded bilinear operators between Banach spaces, using the concepts of Birkhoff-James orthogonality and semi-inner-products. In the finite-dimensional case, we characterize Birkhoff-James orthogonality of bilinear operators in terms of the norm attainment set. This yields a nice characterization of smoothness of bilinear operators between Banach spaces, in the finite-dimensional case. Without any restriction on the dimension of the space, we obtain a complete characterization of the norm attainment set of a bounded bilinear operator using semi-inner-products, which is particularly useful when the concerned Banach spaces are smooth.  
\end{abstract}

\maketitle

\section{Introduction.}

In recent times, geometry of bounded linear operators between Banach spaces has been a topic of considerable interest \cite{BS,K,SP,S,Sa,T}. A common theme in each of these articles is Birkhoff-James orthogonality \cite{B,J} in Banach spaces and its interaction with bounded linear operators. Very recently, the norm attainment problem for symmetric multilinear forms on Hilbert spaces has been explored in \cite{CR}. The purpose of the present article is to explore the smoothness and the norm attainment of bounded bilinear operators between Banach spaces.\\

In this article, letters $ \mathbb{X}, \mathbb{Y}, \mathbb{Z}$ stand for Banach spaces. We will consider only real Banach spaces of dimension strictly greater than $ 1 $ throughout the article, without mentioning it any further. Since there is no chance of confusion, we will use the same symbol $ \|.\| $ to denote the norm function on any Banach space. Let $ B_{\mathbb{X}} = \{ x \in \mathbb{X} : \| x \| \leq 1 \} $ and $ S_{\mathbb{X}} = \{ x \in \mathbb{X} : \| x \| = 1 \} $ denote the unit ball and the unit sphere of $ \mathbb{X} $ respectively. Given any $ x \in \mathbb{X} $ and any $ r > 0, $ let $ B(x,r) = \{ y \in \mathbb{X} : \| y-x \| < r \} $ denote the open ball with radius $ r $ and center at $ x. $ It is easy to see that $ \mathbb{X} \times \mathbb{Y}, $ equipped with the norm $ \| (x,y) \| = \max \{ \|x\|,\|y\| \} $ for all $ (x,y) \in \mathbb{X} \times \mathbb{Y}, $ is a Banach space. Throughout the present article, we consider $ \mathbb{X} \times \mathbb{Y} $ to be endowed with the aforesaid norm. Let $ \mathcal{T}: \mathbb{X} \times \mathbb{Y} \longrightarrow \mathbb{Z} $ be a bilinear operator, i.e., $ \mathcal{T} $ is linear in each argument. The norm of $ \mathcal{T} $ is defined as $ \| \mathcal{T} \| = \sup \{ \| \mathcal{T}(x,y) \| :  (x,y) \in S_{\mathbb{X} \times \mathbb{Y} } \}. $ Let $ M_{\mathcal{T}} $ denote the norm attainment set of $ \mathcal{T}, $ i.e., $ M_{\mathcal{T}} = \{ (x,y) \in S_{\mathbb{X} \times \mathbb{Y}} : \| \mathcal{T}(x,y) \| = \| \mathcal{T} \| \}. $ We say that $ \mathcal{T} $ is bounded if $ \| \mathcal{T} \| < \infty .$ The Banach space of all bounded bilinear operators from $ \mathbb{X} \times \mathbb{Y} $ to $ \mathbb{Z}, $ equipped with the above mentioned norm is denoted by $ \mathscr{B}(\mathbb{X} \times \mathbb{Y},\mathbb{Z}). $\\

Our first major result is to describe the smooth points of $ \mathscr{B}(\mathbb{X} \times \mathbb{Y},\mathbb{Z}), $ in the finite-dimensional case. This follows from a characterization of Birkhoff-James orthogonality of bounded bilinear operators between finite-dimensional Banach spaces. We next study the norm attainment set of a bounded bilinear operator between Banach spaces and contrast it with the corresponding norm attainment set of a bounded linear operator between Banach spaces. Finally, we obtain a complete characterization of the norm attainment set of a bounded bilinear operator between Banach spaces. In order to fulfill our goal, we apply the concept of semi-inner-products in Banach spaces \cite{G,L} and Birkhoff-James orthogonality techniques. Let us briefly recall the relevant definitions in this context.  

\begin{definition}
Let $ \mathbb{X} $ be a Banach space and let $ x,y \in \mathbb{X}.  $ We say that $ x $ is Birkhoff-James orthogonal to $ y, $ written as $ x \perp_B y, $ if $ \| x + \lambda y \| \geq \| x \| $ for all scalars $ \lambda. $ 
\end{definition}

Birkhoff-James orthogonality was decomposed into two parts in \cite{S} by introducing the positive part and the negative part of an element in a Banach space.

\begin{definition}
Let $ \mathbb{X} $ be a Banach space and let $ x \in \mathbb{X}.  $ The positive part of $ x $ is defined as $ x^{+} = \{ y \in \mathbb{X} : \| x+ \lambda y \| \geq \| x \| ~\textrm{for all}~ \lambda \geq 0 \}. $ Similarly, the negative part of $ x $ is defined as $ x^{-} = \{ y \in \mathbb{X} : \| x+ \lambda y \| \geq \| x \| ~\textrm{for all}~ \lambda \leq 0 \}. $ The Birkhoff-James orthogonality set of $ x $ is defined as $ x^{\perp} = \{ y \in \mathbb{X} : x \perp_B y\}. $ 
\end{definition}

We next mention the concept of semi-inner-products (s.i.p.) in Banach spaces, which is essential for characterizing the norm attainment set of a bounded bilinear operator between Banach spaces. 

\begin{definition}
Let $ \mathbb{X} $ be a normed space. A function $ [ ~,~ ] : \mathbb{X} \times \mathbb{X} \longrightarrow \mathbb{R} $ is a semi-inner-product (s.i.p.) if for any $ \alpha,~\beta \in \mathbb{R} $ and for any $ x,~y,~z \in \mathbb{X}, $ it satisfies the following:\\
$ (a) $ $ [\alpha x + \beta y, z] = \alpha [x,z] + \beta [y,z], $\\
$ (b) $ $ [x,x] > 0, $ whenever $ x \neq 0, $\\
$ (c) $ $ |[x,y]|^{2} \leq [x,x] [y,y], $\\
$ (d) $ $ [x,\alpha y] = \alpha [x,y]. $
\end{definition} 

It was proved in \cite{G} that every normed space $ (\mathbb{X},\|~\|) $ can be represented as a s.i.p. space $ (\mathbb{X},[~,~]) $ such that for all $ x \in \mathbb{X}, $ we have that $ [x,x] = \| x \|^{2}. $ In this article, whenever we speak of a s.i.p. $ [~,~] $ in context of a Banach space $ \mathbb{X} $, we implicitly assume that  $ [~,~] $ is compatible with the norm, i.e., for all $ x \in \mathbb{X}, $ we have that $ [x,x] = \| x \|^{2}. $ As an application of the concept of s.i.p. in Banach spaces, we obtain a complete characterization of the norm attainment set of a bounded bilinear operator between Banach spaces. We would like to further observe that our characterization is particularly useful when the underlying Banach spaces are smooth. Let us recall the basic definitions related to the concept of smoothness in Banach spaces. A Banach space $ \mathbb{X} $ is said to be smooth at a point $ x \in S_{\mathbb{X}} $ if there exists a unique supporting hyperplane to $ B_{\mathbb{X}} $ at $ x. $ Let $ \textrm{sm}~S_{\mathbb{X}} $ denote the set of all smooth points on the unit sphere $ S_{\mathbb{X}} $ of $ \mathbb{X}. $ We say that $ \mathbb{X} $ is smooth at a non-zero $ x \in \mathbb{X} $ if $ \mathbb{X} $ is smooth at $ \frac{x}{\|x\|} \in S_{\mathbb{X}}. $ Finally, $ \mathbb{X} $ is said to be a smooth Banach space if $ \mathbb{X} $ is smooth at each non-zero point of $ \mathbb{X}
. $

\section{Main Results}
Let $ \mathbb{X},\mathbb{Y} $ be Banach spaces. We begin this section by describing the positive part, the negative part and the Birkhoff-James orthogonality set of an element $ (x,y) \in \mathbb{X} \times \mathbb{Y}. $ As we will see, the answer depends on whether $ \| x \| = \| y \|. $

\begin{prop}\label{prop1}
Let $ \mathbb{X},\mathbb{Y} $ be Banach spaces and let $ (x,y) \in \mathbb{X} \times \mathbb{Y}, $ with $ \| x \| > \| y \|. $ Then the following are true:\\

(i) $ (x,y)^{+} = x^{+} \times \mathbb{Y}, $ \enspace (ii) $ (x,y)^{-} = x^{-} \times \mathbb{Y}, $ \enspace (iii) $ (x,y)^{\perp} = x^{\perp} \times \mathbb{Y}. $

\end{prop}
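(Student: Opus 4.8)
The plan is to work directly from the definitions of the positive part, negative part, and Birkhoff-James orthogonality set, exploiting two facts: that the norm on $\mathbb{X} \times \mathbb{Y}$ is the maximum of the two coordinate norms, and that $\lambda \mapsto \|x + \lambda u\|$ is a convex function of $\lambda$ for every fixed $u$. Since $\|x\| > \|y\|$, I first record that $\|(x,y)\| = \|x\|$, so that for $(u,v) \in \mathbb{X} \times \mathbb{Y}$ the defining inequality for membership in $(x,y)^{+}$ reads
\[
\max\{\|x + \lambda u\|,\ \|y + \lambda v\|\} \ge \|x\| \qquad (\lambda \ge 0).
\]

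For the inclusion $x^{+} \times \mathbb{Y} \subseteq (x,y)^{+}$, I would take $u \in x^{+}$ and arbitrary $v \in \mathbb{Y}$; then $\|x + \lambda u\| \ge \|x\|$ for all $\lambda \ge 0$, and since the maximum dominates its first entry, the displayed inequality holds. This direction is immediate and does not use $\|x\| > \|y\|$.

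The substantive direction is $(x,y)^{+} \subseteq x^{+} \times \mathbb{Y}$. Given $(u,v) \in (x,y)^{+}$, I must show $u \in x^{+}$, the component $v$ being unconstrained. The key observation is that the strict inequality $\|x\| > \|y\|$ produces a neighborhood of $\lambda = 0$ on which the $\mathbb{Y}$-coordinate cannot realize the maximum: choosing $\delta > 0$ with $\|y\| + \delta\|v\| < \|x\|$ (any $\delta$ if $v = 0$) gives $\|y + \lambda v\| \le \|y\| + \lambda\|v\| < \|x\|$ for all $0 \le \lambda < \delta$, whence the displayed inequality forces $\|x + \lambda u\| \ge \|x\|$ on $[0,\delta)$. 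To upgrade this local inequality to all $\lambda \ge 0$ I would invoke convexity of $g(\lambda) = \|x + \lambda u\|$: if $g(\lambda_0) < \|x\| = g(0)$ held for some $\lambda_0 > 0$, then for any $\lambda_1 \in (0,\delta)$ the convexity estimate $g(\lambda_1) \le \frac{\lambda_0 - \lambda_1}{\lambda_0}\, g(0) + \frac{\lambda_1}{\lambda_0}\, g(\lambda_0) < g(0)$ would contradict $g(\lambda_1) \ge \|x\|$. Hence $u \in x^{+}$, which proves (i).

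Part (ii) is proved verbatim with the range $\lambda \le 0$ in place of $\lambda \ge 0$ throughout, the neighborhood argument and convexity being symmetric under $\lambda \mapsto -\lambda$. Finally, (iii) follows formally from (i) and (ii): since $x \perp_B z$ means exactly $\|x + \lambda z\| \ge \|x\|$ for all $\lambda$, one has $x^{\perp} = x^{+} \cap x^{-}$ and likewise $(x,y)^{\perp} = (x,y)^{+} \cap (x,y)^{-}$, so that $(x,y)^{\perp} = (x^{+} \times \mathbb{Y}) \cap (x^{-} \times \mathbb{Y}) = (x^{+} \cap x^{-}) \times \mathbb{Y} = x^{\perp} \times \mathbb{Y}$. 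I expect the only genuine obstacle to be the passage from the local inequality near $\lambda = 0$ to the global one, which is precisely where convexity of the norm is indispensable; everything else amounts to bookkeeping with the maximum norm.
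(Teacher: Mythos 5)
Your proof is correct and follows essentially the same route as the paper's: the easy inclusion is handled identically, the hard inclusion rests on the same two ingredients (the strict gap $\|x\|>\|y\|$ controlling the maximum norm near $\lambda=0$, together with convexity of $\lambda\mapsto\|x+\lambda u\|$), and (iii) is obtained as the intersection of (i) and (ii). The only difference is organizational: you argue directly, upgrading the local inequality to a global one via convexity, whereas the paper argues by contradiction, using convexity to push a global failure down to small $\lambda$ --- mirror images of the same idea.
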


\begin{proof}
(i): Let $ (x_1,y_1) \in (x,y)^{+}. $ If possible, suppose that $ x_1 \notin x^{+}. $ Then there exists $ \lambda_0 > 0 $ such that $ \| x + \lambda x_1 \| < \| x \| $ for all $ \lambda \in (0, \lambda_0). $ Since $ \| x \| > \| y \|, $ it follows that for sufficiently small $ \lambda > 0, $ we have that $ \| (x,y) + \lambda (x_1,y_1) \| = \max \{ \| x + \lambda x_1 \|, \| y + \lambda y_1 \| \} = \| x+\lambda x_1 \| < \| x \| = \| (x,y) \|. $ This contradicts our assumption that $ (x_1,y_1) \in (x,y)^{+} $ and completes the proof of the fact that $ (x,y)^{+} \subseteq x^{+} \times \mathbb{Y}. $ Conversely, if $ (x_2,y_2) \in x^{+} \times \mathbb{Y} $ then for any $ \lambda \geq 0, $ we have that $ \| (x,y) + \lambda (x_2,y_2) \| \geq \| x + \lambda x_2 \| \geq \| x \| = \| (x,y) \|. $\\ This gives the set inclusion in the other direction  $ x^{+} \times \mathbb{Y} \subseteq (x,y)^{+} $ and completes the proof of our assertion.\\

(ii): The proof of (ii) can be completed similarly as above.\\

(iii): It follows from Proposition $2.1$ of \cite{S} and the above two results that $ (x,y)^{\perp} = (x,y)^{+} \bigcap (x,y)^{-} = (x^{+} \times \mathbb{Y}) \bigcap (x^{-} \times \mathbb{Y}) = x^{\perp} \times \mathbb{Y}. $
\end{proof}

On the other hand, if $ \| x \| = \| y \| $ in the above proposition, then we have the following result:

\begin{prop}\label{prop2}
Let $ \mathbb{X},\mathbb{Y} $ be Banach spaces and let $ (x,y) \in \mathbb{X} \times \mathbb{Y}, $ with $ \| x \| = \| y \|. $ Then the following are true:\\

(i) $ (x,y)^{+} = (x^{+} \times \mathbb{Y}) \bigcup (\mathbb{X} \times y^{+}), $ \enspace (ii) $ (x,y)^{-} = (x^{-} \times \mathbb{Y}) \bigcup (\mathbb{X} \times y^{-}), $\\
(iii) $ (x,y)^{\perp} = (x^{+} \times y^{-}) \bigcup (x^{-} \times y^{+}).  $

\end{prop}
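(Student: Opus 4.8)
The plan is to prove (i) and (ii) directly from the definitions, crucially exploiting the hypothesis $\|x\| = \|y\|$, and then to deduce (iii) purely set-theoretically from (i) and (ii) together with the identity $(x,y)^{\perp} = (x,y)^{+} \cap (x,y)^{-}$ coming from Proposition $2.1$ of \cite{S}.

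For (i), the inclusion $(x^{+} \times \mathbb{Y}) \cup (\mathbb{X} \times y^{+}) \subseteq (x,y)^{+}$ is immediate: if $x_1 \in x^{+}$ then for any $\lambda \geq 0$ we have $\|(x,y) + \lambda(x_1,y_1)\| \geq \|x + \lambda x_1\| \geq \|x\| = \|(x,y)\|$, and symmetrically for the second factor. For the reverse inclusion I would argue by contradiction: suppose $(x_1,y_1) \in (x,y)^{+}$ with $x_1 \notin x^{+}$ and $y_1 \notin y^{+}$. Using convexity of $\lambda \mapsto \|x + \lambda x_1\|$ and of $\lambda \mapsto \|y + \lambda y_1\|$ (both taking the value $\|x\| = \|y\|$ at $\lambda = 0$), the failure of membership produces thresholds $\lambda_1, \lambda_2 > 0$ below which each norm strictly decreases, so on the common interval $(0, \min\{\lambda_1,\lambda_2\}]$ one has $\|x + \lambda x_1\| < \|x\|$ and $\|y + \lambda y_1\| < \|y\|$ simultaneously. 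Here the hypothesis $\|x\| = \|y\|$ is essential: it forces $\max\{\|x+\lambda x_1\|, \|y+\lambda y_1\|\}$ to drop strictly below $\|(x,y)\|$, contradicting $(x_1,y_1) \in (x,y)^{+}$. Part (ii) is entirely analogous, replacing $\lambda \geq 0$ by $\lambda \leq 0$.

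For (iii), I would substitute (i) and (ii) into $(x,y)^{\perp} = (x,y)^{+} \cap (x,y)^{-}$ and distribute the intersection over the two unions. This yields four terms, namely $x^{\perp} \times \mathbb{Y}$, $x^{+} \times y^{-}$, $x^{-} \times y^{+}$ and $\mathbb{X} \times y^{\perp}$, upon using $x^{+} \cap x^{-} = x^{\perp}$ and $y^{+} \cap y^{-} = y^{\perp}$. The main point is then to absorb the first and last terms into the middle two. For this I would first establish the auxiliary identities $x^{+} \cup x^{-} = \mathbb{X}$ and $y^{+} \cup y^{-} = \mathbb{Y}$: given any $z$, if $z \notin x^{+}$ then $\|x + \lambda_0 z\| < \|x\|$ for some $\lambda_0 > 0$, whence convexity of $\lambda \mapsto \|x + \lambda z\|$ forces this function to stay at or above $\|x\|$ for all $\lambda \leq 0$, i.e. $z \in x^{-}$. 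Combining this with $x^{\perp} \subseteq x^{+}$ and $x^{\perp} \subseteq x^{-}$ gives $x^{\perp} \times \mathbb{Y} = x^{\perp} \times (y^{+} \cup y^{-}) \subseteq (x^{+} \times y^{-}) \cup (x^{-} \times y^{+})$, and symmetrically $\mathbb{X} \times y^{\perp} \subseteq (x^{+} \times y^{-}) \cup (x^{-} \times y^{+})$. The remaining two terms are exactly the claimed set, which completes the proof.

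The step I expect to be the main obstacle is the absorption argument in (iii): one must recognize that the naive distribution produces two spurious full-space terms and that these are swallowed by the others precisely because $x^{+} \cup x^{-} = \mathbb{X}$ and $y^{+} \cup y^{-} = \mathbb{Y}$. The short convexity lemma underlying these covering identities is the real crux, and it is the same mechanism that makes parts (i) and (ii) work.
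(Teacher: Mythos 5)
Your proposal is correct and follows essentially the same route as the paper: the same contradiction argument (via convexity-produced thresholds and the hypothesis $\|x\|=\|y\|$) for the hard inclusions in (i) and (ii), and the same distribution-then-absorption computation for (iii). The only cosmetic difference is that where the paper invokes Proposition $2.1$ of \cite{S} for the facts $x^{+}\cup x^{-}=\mathbb{X}$, $y^{+}\cup y^{-}=\mathbb{Y}$ and $x^{\perp}=x^{+}\cap x^{-}$, you prove the covering identity directly by a short convexity argument, which makes the proof self-contained but is the same underlying mechanism.
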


\begin{proof}

(i): We first show that $ (x,y)^{+} \subseteq (x^{+} \times \mathbb{Y}) \bigcup (\mathbb{X} \times y^{+}). $ Let $ (x_1,y_1) \in (x,y)^{+}. $ If $ x_1 \in x^{+} $ then we are done. Suppose $ x_1 \notin x^{+}. $ Then there exists $ \lambda_1 > 0 $ such that $ \| x + \lambda x_1 \| < \| x \| $ for all $ \lambda \in (0, \lambda_1). $ We claim that $ y_1 \in y^{+}. $ If possible, suppose that our claim is not true. Then there exists $ \lambda_2 > 0 $ such that $ \| y + \lambda y_1 \| < \| y \| $ for all $ \lambda \in (0, \lambda_2). $ Therefore, for any $ \lambda \in (0, \min{\lambda_1,\lambda_2}), $ we have that $ \| (x,y) + \lambda (x_1),y_1 \| = \max\{\| x + \lambda x_1 \|, \| y + \lambda y_1 \|\} < \| x \| = \| y \| = \| (x,y) \|. $ However, this clearly contradicts our assumption that $ (x_1,y_1) \in (x,y)^{+}. $ This completes the proof of our claim. Therefore, we have that  $ (x,y)^{+} \subseteq (x^{+} \times \mathbb{Y}) \bigcup (\mathbb{X} \times y^{+}). $ We would like to note that the set inclusion in the other direction $ (x^{+} \times \mathbb{Y}) \bigcup (\mathbb{X} \times y^{+}) \subseteq (x,y)^{+} $ follows quite trivially, following the same line of arguments as given in the corresponding part of the proof of Proposition \ref{prop1}.\\

(ii): The proof of (ii) can be completed similarly as above.\\

(iii): It follows from Proposition $2.1$ of \cite{S} and the above two results that 

\begin{align*} 
(x,y)^{\perp} & = (x,y)^{+} \bigcap (x,y)^{-} \\
              & = \big[(x^{+} \times \mathbb{Y}) \bigcup (\mathbb{X} \times y^{+})\big]~ \bigcap~ \big[(x^{-} \times \mathbb{Y}) \bigcup (\mathbb{X} \times y^{-})\big] \\
              & = \big[\{(x^{+} \times \mathbb{Y}) \bigcup (\mathbb{X} \times y^{+})\} \bigcap (x^{-} \times \mathbb{Y})\big]~\bigcup~\big[\{(x^{+} \times \mathbb{Y}) \bigcup (\mathbb{X} \times y^{+})\} \bigcap (\mathbb{X} \times y^{-})\big] \\
              & = \big[(x^{\perp} \times \mathbb{Y}) \bigcup (x^{-} \times y^{+})\big]~\bigcup~ \big[(x^{+} \times y^{-}) \bigcup (\mathbb{X} \times y^{\perp})\big]\\
              & = (x^{+} \times y^{-}) \bigcup (x^{-} \times y^{+}).               
\end{align*}

We would like to note that in the above deduction, the last equality follows from the fact that $ (x^{\perp} \times \mathbb{Y})~\bigcup~ (\mathbb{X} \times y^{\perp}) \subseteq (x^{+} \times y^{-})~ \bigcup~ (x^{-} \times y^{+}), $ which can be proved by an easy application of Proposition $ 2.1 $ of \cite{S}.

\end{proof}

As an application of Proposition \ref{prop1}, we next describe the smooth points on the unit sphere of the Banach space $ \mathbb{X} \times \mathbb{Y}. $

\begin{theorem}\label{smooth1}
Let $ \mathbb{X},\mathbb{Y} $ be Banach spaces. Then $ \textrm{sm}~S_{\mathbb{X} \times \mathbb{Y}} = \big[(\textrm{sm}~S_{\mathbb{X}})
 \times (B_{\mathbb{Y}} \setminus S_{\mathbb{Y}})\big]~\bigcup~\big[(B_{\mathbb{X}} \setminus S_{\mathbb{X}}) \times (\textrm{sm}~S_{\mathbb{Y}})\big]. $
\end{theorem}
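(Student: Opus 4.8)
The plan is to reduce smoothness of a point to the linear structure of its Birkhoff-James orthogonality set, and then read off the answer directly from Propositions \ref{prop1} and \ref{prop2}. The classical fact I will invoke is that a point $ z $ on the unit sphere of a Banach space $ \mathbb{W} $ is smooth if and only if $ z^{\perp} $ is a linear subspace of $ \mathbb{W} $; this is the right-additivity characterization of smoothness via Birkhoff-James orthogonality \cite{J}. Since $ z \perp_B w $ is equivalent to $ z \perp_B \lambda w $ for every scalar $ \lambda, $ each set $ z^{\perp} $ automatically contains $ 0 $ and is closed under scalar multiplication, so the only substantive requirement for $ z^{\perp} $ to be a subspace is closure under addition. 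This observation will streamline every case.

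Next I fix $ (x,y) \in S_{\mathbb{X} \times \mathbb{Y}}, $ so that $ \max\{\|x\|,\|y\|\} = 1, $ and split into three cases according to the comparison of $ \|x\| $ and $ \|y\|. $ If $ \|x\| > \|y\|, $ then necessarily $ \|x\| = 1 $ and $ \|y\| < 1, $ and Proposition \ref{prop1}(iii) gives $ (x,y)^{\perp} = x^{\perp} \times \mathbb{Y}; $ this product is a subspace precisely when $ x^{\perp} $ is, that is, precisely when $ x \in \textrm{sm}~S_{\mathbb{X}}. $ The smooth points arising in this case are therefore exactly those in $ (\textrm{sm}~S_{\mathbb{X}}) \times (B_{\mathbb{Y}} \setminus S_{\mathbb{Y}}). $ The case $ \|y\| > \|x\| $ is entirely symmetric and contributes $ (B_{\mathbb{X}} \setminus S_{\mathbb{X}}) \times (\textrm{sm}~S_{\mathbb{Y}}). $

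The remaining case $ \|x\| = \|y\| = 1 $ is where the only real work lies, and I expect it to be the main obstacle: I must show that $ (x,y)^{\perp} $ is never a subspace here, so that none of these points is smooth. By Proposition \ref{prop2}(iii), $ (x,y)^{\perp} = (x^{+} \times y^{-}) \cup (x^{-} \times y^{+}). $ I will exhibit two elements of this set whose sum escapes it. Since $ \|x\| = 1, $ one checks directly that $ x \in x^{+} $ and $ 0 \in x^{-}, $ and likewise $ y \in y^{+} $ and $ 0 \in y^{-}; $ hence $ (x,0) \in x^{+} \times y^{-} $ and $ (0,y) \in x^{-} \times y^{+} $ both lie in $ (x,y)^{\perp}. $ Their sum is $ (x,y), $ and taking $ \lambda = -1 $ gives $ \|(x,y) + \lambda(x,y)\| = 0 < 1 = \|(x,y)\|, $ so $ (x,y) \notin (x,y)^{\perp}. $ Thus $ (x,y)^{\perp} $ fails to be closed under addition, whence $ (x,y) $ is not smooth.

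Collecting the three cases yields the stated equality. As a final consistency check I will note that no point with $ \|x\| = \|y\| = 1 $ can belong to the right-hand side, since each of its two pieces forces one coordinate into the open ball $ B \setminus S; $ this confirms that the exclusion coming from the third case matches the right-hand side exactly, and completes the identification of $ \textrm{sm}~S_{\mathbb{X} \times \mathbb{Y}}. $
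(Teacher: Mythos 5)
Your proof is correct and follows essentially the same route as the paper: both rest on James' right-additivity characterization of smoothness (Theorem 4.2 of \cite{J}) combined with Propositions \ref{prop1} and \ref{prop2}, splitting into the cases $ \|x\| \neq \|y\| $ and $ \|x\| = \|y\| = 1 $. Your only deviations are cosmetic ones: you read the equal-norms obstruction off Proposition \ref{prop2}(iii) using the cleaner pair $ (x,0), (0,y) $ summing to $ (x,y), $ whereas the paper verifies directly that $ (-\tfrac{1}{2}x, y) $ and $ (x, -\tfrac{1}{2}y) $ are orthogonal to $ (x,y) $ while their sum $ \tfrac{1}{2}(x,y) $ is not.
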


\begin{proof}
Let $ (x,y) \in \textrm{sm}~S_{\mathbb{X} \times \mathbb{Y}}. $ Clearly, $ \| (x,y) \| = \max\{ \|x\|,\|y\| \} = 1. $ Without any loss of generality, we assume that $ \| x \| = 1 \geq \| y \|. $ Our first claim is that $ \| y \| < 1. $ If possible, suppose that $ \|y\|=1. $ Now, it is easy to see that $ (x,y) \perp_B (-\frac{1}{2}x,y). $ Indeed, for any $ \lambda \geq 0, $ we have that $ \| (x,y) + \lambda (-\frac{1}{2}x,y) \| \geq \| (1+\lambda) y \| = 1 + \lambda \geq 1 = \| (x,y) \|. $ On the other hand, for any $ \lambda < 0, $ we have that $ \| (x,y) + \lambda (-\frac{1}{2}x,y) \| \geq \| (1 - \frac{1}{2}\lambda) x \| = 1 - \frac{1}{2} \lambda > 1 = \| (x,y) \|. $ Similarly, it is easy to see that $ (x,y) \perp_B (x,-\frac{1}{2}y). $ However, it is immediate that $ (x,y) \not\perp_B \frac{1}{2} (x,y). $ Since $ \frac{1}{2} (x,y) = (-\frac{1}{2}x,y) + (x,-\frac{1}{2}y), $ it follows that Birkhoff-James orthogonality in $ \mathbb{X} \times \mathbb{Y} $ is not right additive at the point $ (x,y). $ Therefore, applying the Theorem $ 4.2 $ of \cite{J}, we deduce that $ (x,y) $ is not a smooth point of $ \mathbb{X} \times \mathbb{Y}, $ a contradiction to our assumption. This completes the proof of our claim that $ \|y\| < 1, $ i.e., $ y \in B_{\mathbb{Y}} \setminus S_{\mathbb{Y}}. $ Our next claim is that $ x \in \textrm{sm}~S_{\mathbb{X}}. $ If possible, suppose that $ x \notin \textrm{sm}~S_{\mathbb{X}}. $ Then there exists $ u_1, u_2 \in \mathbb{X} $ such that $ x \perp_B u_1, x \perp_B u_2 $ but $ x \not\perp_B (u_1+u_2). $ Since $ \| y \| < 1, $ it follows from Proposition \ref{prop1} that $ (x,y) \perp_B (u_1,y), (x,y) \perp_B (u_2,y) $ but $ (x,y) \not\perp_B (u_1+u_2,y), $ once again contradicting our hypothesis that $ (x,y) \in \textrm{sm}~S_{\mathbb{X} \times \mathbb{Y}}. $ This completes the proof of our claim that $ x \in \textrm{sm}~S_{\mathbb{X}}. $ Moreover, it is now obvious that if we had assumed $ \| y \| = 1, $ instead of assuming $ \| x \| = 1, $ then we would have obtained the conclusion that $ \| x \| < 1 $ and $ y \in \textrm{sm}~S_{\mathbb{Y}}. $ This completes the proof of the fact that $ \textrm{sm}~S_{\mathbb{X} \times \mathbb{Y}} \subseteq \big[(\textrm{sm}~S_{\mathbb{X}})
 \times (B_{\mathbb{Y}} \setminus S_{\mathbb{Y}})\big]~\bigcup~\big[(B_{\mathbb{X}} \setminus S_{\mathbb{X}}) \times (\textrm{sm}~S_{\mathbb{Y}})\big]. $ Conversely, let us assume that $ (z,w) \in (\textrm{sm}~S_{\mathbb{X}}) \times (B_{\mathbb{Y}} \setminus S_{\mathbb{Y}}). $ Then once again applying Proposition $ 2.1, $ it is easy to see that Birkhoff-James orthogonality is right additive at the point $ (z,w). $ This completes the proof of the fact that $ (\textrm{sm}~S_{\mathbb{X}}) \times (B_{\mathbb{Y}} \setminus S_{\mathbb{Y}}) \subseteq \textrm{sm}~S_{\mathbb{X} \times \mathbb{Y}}. $ Similarly, it can be shown that $ (B_{\mathbb{X}} \setminus S_{\mathbb{X}}) \times (\textrm{sm}~S_{\mathbb{Y}}) \subseteq \textrm{sm}~S_{\mathbb{X} \times \mathbb{Y}}. $ Combining all these conclusions, we deduce that $ \textrm{sm}~S_{\mathbb{X} \times \mathbb{Y}} = \big[(\textrm{sm}~S_{\mathbb{X}})
 \times (B_{\mathbb{Y}} \setminus S_{\mathbb{Y}})\big]~\bigcup~\big[(B_{\mathbb{X}} \setminus S_{\mathbb{X}}) \times (\textrm{sm}~S_{\mathbb{Y}})\big]. $ This establishes the theorem.
\end{proof}

As a corollary to the above result, we point out a special fact about the norm attainment of bounded bilinear operators between Banach spaces.

\begin{cor} \label{cor1}
Let $ \mathbb{X}, \mathbb{Y}, \mathbb{Z} $ be Banach spaces and let $ \mathcal{T} \in \mathscr{B}(\mathbb{X} \times \mathbb{Y},\mathbb{Z}) $ be non-zero. Then $ M_{\mathcal{T}} \bigcap (\textrm{sm}~S_{\mathbb{X} \times \mathbb{Y}})  = \emptyset. $
\end{cor}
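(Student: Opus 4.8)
The plan is to argue by contradiction, leveraging the structural description of smooth points furnished by Theorem \ref{smooth1}. Suppose, towards a contradiction, that there exists some $ (x,y) \in M_{\mathcal{T}} \bigcap (\textrm{sm}~S_{\mathbb{X} \times \mathbb{Y}}). $ By Theorem \ref{smooth1}, the smoothness of $ (x,y) $ forces exactly one of its two coordinates to have norm strictly less than $ 1. $ Without any loss of generality, I would assume $ x \in \textrm{sm}~S_{\mathbb{X}} $ and $ y \in B_{\mathbb{Y}} \setminus S_{\mathbb{Y}}, $ so that $ \|x\| = 1 $ while $ \|y\| < 1. $

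The central idea is to exploit the bilinearity of $ \mathcal{T} $ together with the fact that $ (x,y) \in M_{\mathcal{T}}, $ i.e., $ \| \mathcal{T}(x,y) \| = \| \mathcal{T} \|. $ Since $ \mathcal{T} $ is non-zero, $ \| \mathcal{T} \| > 0, $ and hence $ \mathcal{T}(x,y) \neq 0; $ in particular this rules out the degenerate possibility $ y = 0, $ because $ \mathcal{T}(x,0) = 0 $ by linearity in the second argument. Thus $ 0 < \|y\| < 1, $ and I may consider the point $ (x, \frac{y}{\|y\|}) \in S_{\mathbb{X} \times \mathbb{Y}}. $ By bilinearity, $ \mathcal{T}(x, \frac{y}{\|y\|}) = \frac{1}{\|y\|} \mathcal{T}(x,y), $ whence $ \| \mathcal{T}(x, \frac{y}{\|y\|}) \| = \frac{\| \mathcal{T} \|}{\|y\|} > \| \mathcal{T} \|. $ This contradicts the definition of $ \| \mathcal{T} \| $ as the supremum of $ \| \mathcal{T}(\cdot,\cdot) \| $ over $ S_{\mathbb{X} \times \mathbb{Y}}, $ which completes the argument.

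There is no substantial obstacle here; the only points demanding care are the explicit use of the hypothesis that $ \mathcal{T} $ is non-zero (to guarantee $ \| \mathcal{T} \| > 0 $ and thereby to exclude the case $ y = 0 $) and the observation that the rescaling step is legitimate precisely because bilinearity allows a coordinate of sub-unit norm to be inflated to the unit sphere while the value of $ \mathcal{T} $ scales up correspondingly. The symmetric case, in which $ \|y\| = 1 > \|x\|, $ is handled identically by interchanging the roles of the two coordinates, and so the conclusion $ M_{\mathcal{T}} \bigcap (\textrm{sm}~S_{\mathbb{X} \times \mathbb{Y}}) = \emptyset $ follows.
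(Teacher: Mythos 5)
Your proof is correct and is essentially the same as the paper's: both arguments rest on the rescaling step (bilinearity forces $ \| \mathcal{T}(x, \frac{y}{\|y\|}) \| = \frac{1}{\|y\|} \| \mathcal{T} \| > \| \mathcal{T} \| $ whenever $ 0 < \|y\| < 1 $, with non-zeroness of $ \mathcal{T} $ ruling out $ y = 0 $) combined with the description of smooth points from Theorem \ref{smooth1}. The only difference is organizational: the paper first shows that every point of $ M_{\mathcal{T}} $ has both coordinates of norm one and then applies Theorem \ref{smooth1}, whereas you run the same computation inside a proof by contradiction.
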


\begin{proof}
If $ \mathcal{T} $ does not attain norm, i.e., if $ M_{\mathcal{T}} = \emptyset $ then we have nothing more to show. Let us assume that $ (x,y) \in M_{\mathcal{T}}. $ Clearly, either $ \| x \| = 1 $ or $ \| y \| = 1. $ Without any loss of generality, we assume that $ \| x \| = 1. $ We claim that $ \| y \| = 1. $ If possible suppose that $ \| y \| < 1. $ Since $ \mathcal{T} $ is not the zero operator, it is easy to observe that $ \| y \| > 0. $ Now, $ \| (x, \frac{y}{\|y\|}) \| = 1 $ and it follows from the bilinearity of $ \mathcal{T} $ that $ \| \mathcal{T}(x, \frac{y}{\|y\|}) \| = \frac{1}{\|y\|} \| \mathcal{T}(x, y) \| = \frac{1}{\|y\|} \| \mathcal{T} \| > \| \mathcal{T} \|, $ contradicting the definition of $ \| \mathcal{T} \|. $ This contradiction completes the proof of our claim that $ \| y \| = 1. $ Therefore, it follows from Theorem \ref{smooth1} that $ (x,y) \notin \textrm{sm}~S_{\mathbb{X} \times \mathbb{Y}}.  $ Since this is true for any $ (x,y) \in M_{\mathcal{T}}, $ the proof of the theorem stands completed.
\end{proof}

In the next remark, we point out a fundamental difference in the norm attainment of bounded linear operators between Banach spaces and bounded bilinear operators between Banach spaces.

\begin{remark}
As seen in Corollary \ref{cor1}, a non-zero bounded bilinear operator between Banach spaces never attains norm at any smooth point of the unit sphere of the domain space. However, this is not necessarily true in case of the norm attainment of bounded linear operators between Banach spaces. Of course, there exists a bounded linear operator on a Banach space that does not attain norm at any smooth point of the space. However, given any Banach space, it is also easy to observe that the set of smooth points on the unit sphere of the space is always non-empty. Indeed, there exists a non-zero bounded linear operator on the space that attains norm at some smooth point of the unit sphere of the domain space.
\end{remark}

On the other hand, the study of Birkhoff-James orthogonality of bounded bilinear operators is analogous to the case of bounded linear operators. In the following theorem, we obtain a complete characterization of the Birkhoff-James orthogonality of bilinear operators on finite-dimensional Banach spaces. We would like to remark that we present only an outline of the proof of the next theorem, as it is analogous to the proof of Theorem $ 2.2 $ of \cite{S}.

\begin{theorem}\label{orthogonality}
Let $ \mathbb{X},\mathbb{Y}, \mathbb{Z} $ be finite-dimensional Banach spaces. Let $ \mathcal{T}, \mathcal{A} \in \mathscr{B}(\mathbb{X} \times \mathbb{Y}, \mathbb{Z}). $ Then $ \mathcal{T} \perp_B \mathcal{A} $ if and only if there exists $ (x_1,y_1), (x_2,y_2) \in M_{\mathcal{T}} $ such that $ \mathcal{A}(x_1,y_1) \in \mathcal{T}(x_1,y_1)^{+} $ and $ \mathcal{A}(x_2,y_2) \in \mathcal{T}(x_2,y_2)^{-}. $
\end{theorem}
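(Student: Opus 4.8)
The plan is to establish the two implications separately. The reverse (sufficiency) implication will follow at once from the definitions, while the forward (necessity) implication will require a compactness argument that crucially exploits the finite-dimensionality of the spaces.

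For sufficiency, suppose such $(x_1,y_1),(x_2,y_2) \in M_{\mathcal{T}}$ exist. Given $\lambda \geq 0$, since $\mathcal{A}(x_1,y_1) \in \mathcal{T}(x_1,y_1)^{+}$, the definition of the positive part yields
\[
\|\mathcal{T} + \lambda \mathcal{A}\| \geq \|\mathcal{T}(x_1,y_1) + \lambda \mathcal{A}(x_1,y_1)\| \geq \|\mathcal{T}(x_1,y_1)\| = \|\mathcal{T}\|.
\]
For $\lambda \leq 0$ one argues identically using $(x_2,y_2)$ and the negative part $\mathcal{T}(x_2,y_2)^{-}$. Hence $\|\mathcal{T} + \lambda \mathcal{A}\| \geq \|\mathcal{T}\|$ for every scalar $\lambda$, i.e. $\mathcal{T} \perp_B \mathcal{A}$.

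For necessity, assume $\mathcal{T} \perp_B \mathcal{A}$; I would produce $(x_1,y_1)$ first, the construction of $(x_2,y_2)$ being symmetric. Fix a sequence $\lambda_n \downarrow 0$. Since $\mathbb{X} \times \mathbb{Y}$ is finite-dimensional, $S_{\mathbb{X}\times\mathbb{Y}}$ is compact, so each $\mathcal{T} + \lambda_n \mathcal{A}$ attains its norm at some $(x_n,y_n) \in M_{\mathcal{T}+\lambda_n\mathcal{A}}$. Passing to a subsequence, I may assume $(x_n,y_n) \to (x_1,y_1) \in S_{\mathbb{X}\times\mathbb{Y}}$. Because $\|\mathcal{T}\| \leq \|\mathcal{T}+\lambda_n\mathcal{A}\| \leq \|\mathcal{T}\| + \lambda_n\|\mathcal{A}\|$, the norms converge, $\|\mathcal{T}+\lambda_n\mathcal{A}\| \to \|\mathcal{T}\|$; combined with continuity of the operators this gives $\|\mathcal{T}(x_1,y_1)\| = \lim_n \|(\mathcal{T}+\lambda_n\mathcal{A})(x_n,y_n)\| = \|\mathcal{T}\|$, so $(x_1,y_1) \in M_{\mathcal{T}}$.

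The main obstacle is then to verify $\mathcal{A}(x_1,y_1) \in \mathcal{T}(x_1,y_1)^{+}$, and here I would pass to the dual. By finite-dimensional compactness of the dual unit ball, extract a norming functional $f_n \in S_{\mathbb{Z}^{*}}$ for $(\mathcal{T}+\lambda_n\mathcal{A})(x_n,y_n)$ and a convergent subsequence $f_n \to f \in S_{\mathbb{Z}^{*}}$. The relation $f_n\big((\mathcal{T}+\lambda_n\mathcal{A})(x_n,y_n)\big) = \|\mathcal{T}+\lambda_n\mathcal{A}\| \to \|\mathcal{T}\|$ forces $f(\mathcal{T}(x_1,y_1)) = \|\mathcal{T}(x_1,y_1)\|$, so $f$ norms $\mathcal{T}(x_1,y_1)$. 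The decisive estimate is
\[
\lambda_n\, f_n(\mathcal{A}(x_n,y_n)) = \|\mathcal{T}+\lambda_n\mathcal{A}\| - f_n(\mathcal{T}(x_n,y_n)) \geq \|\mathcal{T}\| - \|\mathcal{T}\| = 0,
\]
using $f_n(\mathcal{T}(x_n,y_n)) \leq \|\mathcal{T}(x_n,y_n)\| \leq \|\mathcal{T}\|$ together with $\|\mathcal{T}+\lambda_n\mathcal{A}\| \geq \|\mathcal{T}\|$. Since $\lambda_n > 0$, this gives $f_n(\mathcal{A}(x_n,y_n)) \geq 0$, and letting $n \to \infty$ yields $f(\mathcal{A}(x_1,y_1)) \geq 0$. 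As $f$ norms $\mathcal{T}(x_1,y_1)$, for every $\mu \geq 0$ we obtain $\|\mathcal{T}(x_1,y_1) + \mu\mathcal{A}(x_1,y_1)\| \geq f(\mathcal{T}(x_1,y_1)) + \mu f(\mathcal{A}(x_1,y_1)) \geq \|\mathcal{T}(x_1,y_1)\|$, i.e. $\mathcal{A}(x_1,y_1) \in \mathcal{T}(x_1,y_1)^{+}$. Repeating the argument with $\lambda_n \uparrow 0$ produces $(x_2,y_2) \in M_{\mathcal{T}}$ with $\mathcal{A}(x_2,y_2) \in \mathcal{T}(x_2,y_2)^{-}$. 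I expect the only genuine difficulty to be the simultaneous control of the two sequences $(x_n,y_n)$ and $f_n$ in the limit; everything else reduces to the sign bookkeeping displayed above, and finite-dimensionality enters exactly where compactness of $S_{\mathbb{X}\times\mathbb{Y}}$ and $S_{\mathbb{Z}^{*}}$ is invoked.
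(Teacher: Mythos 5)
Your proof is correct, but it takes a genuinely different route from the paper's. The paper proves necessity by contradiction with a covering argument entirely in the primal space: assuming no point of $M_{\mathcal{T}}$ satisfies the positive-part condition, it attaches to each point of $S_{\mathbb{X} \times \mathbb{Y}}$ a relative neighbourhood on which $\|\mathcal{T}(\cdot) + \lambda \mathcal{A}(\cdot)\|$ stays strictly below $\|\mathcal{T}\|$ for all suitably small $\lambda > 0$ (treating points of $M_{\mathcal{T}}$ and of $S_{\mathbb{X} \times \mathbb{Y}} \setminus M_{\mathcal{T}}$ separately, via continuity and convexity of the norm), then uses compactness of the sphere to extract a finite subcover and manufacture a single $\lambda_0 > 0$ with $\|\mathcal{T} + \lambda_0 \mathcal{A}\| < \|\mathcal{T}\|$, contradicting $\mathcal{T} \perp_B \mathcal{A}$. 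You instead argue directly: perturb by $\lambda_n \downarrow 0$, take norm-attaining points $(x_n,y_n)$ of $\mathcal{T} + \lambda_n \mathcal{A}$ and norming functionals $f_n$, and pass to limits using compactness of $S_{\mathbb{X} \times \mathbb{Y}}$ and of $S_{\mathbb{Z}^{*}}$; this is the dual-functional analogue of the paper's primal argument, and all your limit interchanges check out. Your route buys several things: the witnesses $(x_1,y_1)$, $(x_2,y_2)$ are produced constructively as limits rather than extracted from a contradiction; you avoid the somewhat delicate ``continuity and convexity'' neighbourhood step that the paper leaves to the reader; and you actually prove slightly more than the statement, namely the existence of a supporting functional $f$ of $\mathcal{T}(x_1,y_1)$ with $f(\mathcal{A}(x_1,y_1)) \geq 0$, a quantitative form of membership in $\mathcal{T}(x_1,y_1)^{+}$. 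Moreover, if you replace norm compactness of $S_{\mathbb{Z}^{*}}$ by weak$^{*}$ compactness of $B_{\mathbb{Z}^{*}}$, your argument goes through for arbitrary $\mathbb{Z}$, showing that only the finite-dimensionality of $\mathbb{X}$ and $\mathbb{Y}$ is essential (the paper's proof shares this feature, but silently). What the paper's approach buys in exchange is that it needs no Hahn--Banach machinery and reuses verbatim the covering template of Theorem $2.2$ of \cite{S}, which keeps the exposition aligned with its references.
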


\begin{proof}
The sufficient part of the theorem follows trivially. We prove the necessary part of the theorem by the method of contradiction. Since $ \mathbb{X} \times \mathbb{Y} $ is finite-dimensional, it is clear that $ M_{\mathcal{T}} \neq \emptyset. $ If possible, suppose that there does not exist any $ (x,y) \in M_{\mathcal{T}} $ such that $ \mathcal{A}(x,y) \in \mathcal{T}(x,y)^{+}. $ Then for any $ (x,y) \in M_{\mathcal{T}}, $ there exists $ \lambda_{(x,y)} > 0 $ such that $ \| \mathcal{T}(x,y) + \lambda_{(x,y)} \mathcal{A}(x,y) \| < \| \mathcal{T}(x,y) \| = \| \mathcal{T} \|.  $ Using the continuity and the convexity properties of the norm function, it is easy to show that there exist $ r_{(x,y)} > 0 $ such that $ \| \mathcal{T}(z,w) + \lambda \mathcal{A}(z,w) \| < \| \mathcal{T} \| $ for all $ (z,w) \in B((x,y),r_{(x,y)}) \bigcap S_{\mathbb{X} \times \mathbb{Y}} $ and for all $ \lambda \in (0, \lambda_{(x,y)}). $ On the other hand, given any $ (u,v) \in S_{\mathbb{X} \times \mathbb{Y}} \setminus M_{\mathcal{T}}, $ clearly there exists $ r_{(u,v)}, \delta_{(u,v)} > 0 $ such that $ \| \mathcal{T}(u_1,v_1) + \lambda \mathcal{A} (u_1,v_1) \| < \| \mathcal{T} \| $ for all $ (u_1,v_1) \in B((u,v),r_{(u,v)}) \bigcap S_{\mathbb{X} \times \mathbb{Y}} $ and for all $ \lambda \in (-\delta_{(u,v)},\delta_{(u,v)}). $ Now, $ \{ B((x,y),r_{(x,y)}) \bigcap S_{\mathbb{X} \times \mathbb{Y}} : (x,y) \in M_{\mathcal{T}} \}\\
~\bigcup~\{ B((u,v),r_{(u,v)}) \bigcap S_{\mathbb{X} \times \mathbb{Y}} : (u,v) \in S_{\mathbb{X} \times \mathbb{Y}} \setminus M_{\mathcal{T}} \} $ is an open cover of $ S_{\mathbb{X} \times \mathbb{Y}}. $ Since $ S_{\mathbb{X} \times \mathbb{Y}} $ is compact, this open cover has a finite subcover. Depending on this subcover, it is now easy to choose a suitable $ \lambda_0 > 0 $ such that $ \| \mathcal{T}(\alpha,\beta) + \lambda_0 \mathcal{A}(\alpha,\beta) \| < \| \mathcal{T} \| $ for any  $ (\alpha,\beta) \in S_{\mathbb{X} \times \mathbb{Y}}. $ Since $ \mathcal{T} + \lambda_0 \mathcal{A} $ attains norm, this clearly implies that $ \| \mathcal{T}+\lambda_0 \mathcal{A} \| < \| \mathcal{T} \|, $ a contradiction to our hypothesis that $ \mathcal{T} \perp_B \mathcal{A}. $ Therefore, there exists  $ (x_1,y_1) \in M_{\mathcal{T}} $ such that $ \mathcal{A}(x_1,y_1) \in \mathcal{T}(x_1,y_1)^{+}. $ Similarly, we can show that there exists  $ (x_2,y_2) \in M_{\mathcal{T}} $ such that $ \mathcal{A}(x_2,y_2) \in \mathcal{T}(x_2,y_2)^{-}. $ This establishes the theorem.
\end{proof}

We obtain the following corollary to the above theorem, that turns out to be useful in characterizing the smooth bilinear operators between finite-dimensional Banach spaces.

\begin{cor}\label{cor2}
Let $ \mathbb{X}, \mathbb{Y}, \mathbb{Z} $ be finite-dimensional Banach spaces and let $ \mathcal{T} \in \mathscr{B}(\mathbb{X} \times \mathbb{Y}, \mathbb{Z}) $ be such that $ M_{\mathcal{T}} = \{(\pm x_0, \pm y_0) \}. $ Then for any $ \mathcal{A} \in \mathscr{B}(\mathbb{X} \times \mathbb{Y}, \mathbb{Z}) , $ we have that $ \mathcal{T} \perp_B \mathcal{A} $ if and only if $ \mathcal{T}(x_0,y_0) \perp_B \mathcal{A}(x_0,y_0). $
\end{cor}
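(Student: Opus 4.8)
The plan is to read off the result directly from Theorem \ref{orthogonality}, exploiting the fact that the hypothesis $ M_{\mathcal{T}} = \{(\pm x_0, \pm y_0)\} $ pins down the two points supplied by that theorem to lie among these four, and that the positive/negative part conditions at all four points collapse to a single pair of conditions at $ (x_0,y_0). $ To set up the bookkeeping, I would write $ z := \mathcal{T}(x_0,y_0) $ and $ w := \mathcal{A}(x_0,y_0), $ and record the two elementary sign identities for positive and negative parts in any Banach space, namely $ (-u)^{+} = u^{-}, $ $ (-u)^{-} = u^{+}, $ and equivalently $ v \in u^{+} \iff -v \in u^{-}; $ each follows at once from the defining inequalities by the substitution $ \lambda \mapsto -\lambda. $

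The key step is the collapse. By bilinearity, for signs $ \epsilon_1,\epsilon_2 \in \{1,-1\} $ one has $ \mathcal{T}(\epsilon_1 x_0, \epsilon_2 y_0) = \eta z $ and $ \mathcal{A}(\epsilon_1 x_0, \epsilon_2 y_0) = \eta w $ with $ \eta := \epsilon_1\epsilon_2 \in \{1,-1\}; $ thus at every one of the four points the pair of values is $ (\eta z, \eta w). $ The condition $ \mathcal{A}(x_1,y_1) \in \mathcal{T}(x_1,y_1)^{+} $ at such a point reads $ \eta w \in (\eta z)^{+}, $ and using the sign identities above this is equivalent to $ w \in z^{+} $ for both choices of $ \eta. $ In the same way, the condition $ \mathcal{A}(x_2,y_2) \in \mathcal{T}(x_2,y_2)^{-} $ at any of the four points is equivalent to $ w \in z^{-}. $ Hence the two existence assertions in Theorem \ref{orthogonality} are equivalent, respectively, to $ w \in z^{+} $ and $ w \in z^{-}, $ with no dependence on which of the four points is chosen.

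Putting this together, Theorem \ref{orthogonality} gives $ \mathcal{T} \perp_B \mathcal{A} $ if and only if $ w \in z^{+} $ and $ w \in z^{-} $ hold simultaneously, i.e. $ w \in z^{+} \bigcap z^{-}. $ By Proposition $ 2.1 $ of \cite{S} we have $ z^{+} \bigcap z^{-} = z^{\perp}, $ so this says precisely $ z \perp_B w, $ that is $ \mathcal{T}(x_0,y_0) \perp_B \mathcal{A}(x_0,y_0), $ as desired. I do not expect a genuine obstacle here; the only point requiring care is the sign bookkeeping in the collapse step, and in particular checking that both the positive and the negative part conditions are invariant across all four points, so that the choice of $ (x_1,y_1) $ and $ (x_2,y_2) $ in Theorem \ref{orthogonality} is immaterial.
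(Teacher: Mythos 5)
Your proof is correct and follows essentially the same route as the paper's: both apply Theorem \ref{orthogonality}, use bilinearity to collapse the positive/negative part conditions at the four points $(\pm x_0,\pm y_0)$ to conditions at $(x_0,y_0)$ alone, and conclude via Proposition $2.1$ of \cite{S} that $\mathcal{T}(x_0,y_0)^{+} \bigcap \mathcal{T}(x_0,y_0)^{-} = \mathcal{T}(x_0,y_0)^{\perp}$. The only difference is organizational: you carry out the sign collapse uniformly for all four points, whereas the paper argues by a ``without loss of generality'' reduction followed by a case analysis, so your write-up handles both implications in one pass.
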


\begin{proof}
The sufficient part of the theorem follows trivially. Let us prove the necessary part of the corollary. It follows from Theorem \ref{orthogonality} that there exists $ (u_1,v_1), (u_2,v_2) \in M_{\mathcal{T}} $ such that $ \mathcal{A}(u_1,v_1) \in \mathcal{T}(u_1,v_1)^{+} $ and  $ \mathcal{A}(u_2,v_2) \in \mathcal{T}(u_2,v_2)^{-}. $ Without any loss of generality, we assume that $ \mathcal{A}(x_0,y_0) \in \mathcal{T}(x_0,y_0)^{+}. $ Now, if $ \mathcal{A}(-x_0,-y_0) \in \mathcal{T}(-x_0,-y_0)^{-} $ then it follows from the bilinearity of $ \mathcal{T} $ and $ \mathcal{A} $ that $ \mathcal{A}(x_0,y_0) \in \mathcal{T}(x_0,y_0)^{-} $ and therefore, it follows from Proposition $ 2.1 $ of \cite{S} that $ \mathcal{T}(x_0,y_0) \perp_B \mathcal{A}(x_0,y_0).  $ On the other hand, if $ \mathcal{A}(-x_0,y_0) \in \mathcal{T}(-x_0,y_0)^{-} $ or if $ \mathcal{A}(x_0,-y_0) \in \mathcal{T}(x_0,-y_0)^{-}, $ then by using similar arguments we can deduce that $ \mathcal{T}(x_0,y_0) \perp_B \mathcal{A}(x_0,y_0). $ This completes the proof of the corollary.
\end{proof}

We are now in a position to completely characterize the smooth bilinear operators between finite-dimensional Banach spaces. The desired characterization is obtained in the following theorem.

\begin{theorem}\label{smooth2}
Let $ \mathbb{X},\mathbb{Y}, \mathbb{Z} $ be finite-dimensional Banach spaces. Let $ \mathcal{T} \in \mathscr{B}(\mathbb{X} \times \mathbb{Y}, \mathbb{Z}) $ be non-zero. Then $ \mathcal{T} $ is a smooth point in $ \mathscr{B}(\mathbb{X} \times \mathbb{Y}, \mathbb{Z}) $ if and only if there exists $ (x_0,y_0) \in S_{\mathbb{X} \times \mathbb{Y}} $ such that $ M_{\mathcal{T}} = \{ (\pm x_0, \pm y_0) \} $ and $ \mathcal{T}(x_0,y_0) $ is a smooth point in $ \mathbb{Z}. $
\end{theorem}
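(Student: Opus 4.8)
The strategy is to characterize smoothness of $\mathcal{T}$ through the right additivity of Birkhoff-James orthogonality (Theorem $4.2$ of \cite{J}), exactly as in the proof of Theorem \ref{smooth1}: for non-zero $\mathcal{T}$, smoothness is equivalent to the implication that $\mathcal{T}\perp_B\mathcal{A}_1$ and $\mathcal{T}\perp_B\mathcal{A}_2$ together force $\mathcal{T}\perp_B(\mathcal{A}_1+\mathcal{A}_2)$. I will repeatedly use two standing facts: $M_{\mathcal{T}}$ is invariant under sign changes in either coordinate, because $\mathcal{T}(\pm x,\pm y)=\pm\mathcal{T}(x,y)$; and, as shown inside the proof of Corollary \ref{cor1}, every element of $M_{\mathcal{T}}$ has both coordinates of norm one.

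For the \emph{sufficiency}, assume $M_{\mathcal{T}}=\{(\pm x_0,\pm y_0)\}$ and that $\mathcal{T}(x_0,y_0)$ is smooth in $\mathbb{Z}$. If $\mathcal{T}\perp_B\mathcal{A}_1$ and $\mathcal{T}\perp_B\mathcal{A}_2$, then Corollary \ref{cor2} gives $\mathcal{T}(x_0,y_0)\perp_B\mathcal{A}_1(x_0,y_0)$ and $\mathcal{T}(x_0,y_0)\perp_B\mathcal{A}_2(x_0,y_0)$. Smoothness of $\mathcal{T}(x_0,y_0)$ makes orthogonality right additive there, so $\mathcal{T}(x_0,y_0)\perp_B(\mathcal{A}_1+\mathcal{A}_2)(x_0,y_0)$, and a second application of Corollary \ref{cor2} yields $\mathcal{T}\perp_B(\mathcal{A}_1+\mathcal{A}_2)$. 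Hence $\mathcal{T}$ is smooth.

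For the \emph{necessity} the decisive tool is a tensorial observation: if $x_0,x_1\in S_{\mathbb{X}}$ and $y_0,y_1\in S_{\mathbb{Y}}$ satisfy $(x_1,y_1)\notin\{(\pm x_0,\pm y_0)\}$, then $x_0\otimes y_0$ and $x_1\otimes y_1$ are linearly independent in $\mathbb{X}\otimes\mathbb{Y}$ (two unit vectors being proportional only up to sign), so for any prescribed $a,b\in\mathbb{Z}$ there is a bounded bilinear $\mathcal{A}$ with $\mathcal{A}(x_0,y_0)=a$ and $\mathcal{A}(x_1,y_1)=b$. Assuming $\mathcal{T}$ smooth, I first show $M_{\mathcal{T}}$ is a single sign-class. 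For each $(x,y)\in M_{\mathcal{T}}$ and each norm-one $\phi\in\mathbb{Z}^{*}$ with $\phi(\mathcal{T}(x,y))=\|\mathcal{T}\|$, the rule $\mathcal{A}\mapsto\phi(\mathcal{A}(x,y))$ defines a norm-one supporting functional of $\mathcal{T}$, since its norm is at most $1$ as $\|(x,y)\|=1$, while it sends $\mathcal{T}$ to $\|\mathcal{T}\|$. If $M_{\mathcal{T}}$ contained two classes $(x_0,y_0),(x_1,y_1)$ with $(x_1,y_1)\notin\{(\pm x_0,\pm y_0)\}$, fix supporting functionals $\phi_0,\phi_1$ at $\mathcal{T}(x_0,y_0),\mathcal{T}(x_1,y_1)$; the tensorial observation gives an $\mathcal{A}$ with $\mathcal{A}(x_0,y_0)=0$ and $\phi_1(\mathcal{A}(x_1,y_1))\neq0$, so the two supporting functionals $\mathcal{A}'\mapsto\phi_0(\mathcal{A}'(x_0,y_0))$ and $\mathcal{A}'\mapsto\phi_1(\mathcal{A}'(x_1,y_1))$ of $\mathcal{T}$ differ at $\mathcal{A}$, contradicting smoothness. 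Hence $M_{\mathcal{T}}=\{(\pm x_0,\pm y_0)\}$, which is non-empty by compactness in finite dimensions.

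It remains to show that $\mathcal{T}(x_0,y_0)$ is smooth in $\mathbb{Z}$. Given $z_1,z_2\in\mathbb{Z}$ with $\mathcal{T}(x_0,y_0)\perp_B z_1$ and $\mathcal{T}(x_0,y_0)\perp_B z_2$, choose $f\in S_{\mathbb{X}^{*}}$, $g\in S_{\mathbb{Y}^{*}}$ with $f(x_0)=g(y_0)=1$ and set $\mathcal{A}_i(x,y)=f(x)g(y)z_i$, so that $\mathcal{A}_i(x_0,y_0)=z_i$; Corollary \ref{cor2} gives $\mathcal{T}\perp_B\mathcal{A}_i$, smoothness of $\mathcal{T}$ gives $\mathcal{T}\perp_B(\mathcal{A}_1+\mathcal{A}_2)$, and Corollary \ref{cor2} returns $\mathcal{T}(x_0,y_0)\perp_B(z_1+z_2)$. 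Thus orthogonality is right additive at $\mathcal{T}(x_0,y_0)$, so it is smooth. The main obstacle is the single-class step: Theorem \ref{orthogonality} does give a direct construction of $\mathcal{A}_1,\mathcal{A}_2$ violating right additivity when $M_{\mathcal{T}}$ has exactly two classes (take values $2t_i$ and $-t_i$ at the two points, where $t_i=\mathcal{T}(x_i,y_i)$, so that $\mathcal{A}_1+\mathcal{A}_2$ takes the value $t_i$ and admits no negative witness), but controlling $\mathcal{A}_1+\mathcal{A}_2$ at possible further classes of $M_{\mathcal{T}}$ is delicate, which is exactly why I route this step through the uniqueness of the supporting functional instead.
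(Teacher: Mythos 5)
Your proof is correct, and while your sufficiency direction and your final step (smoothness of $\mathcal{T}(x_0,y_0)$) coincide with the paper's argument---both run through Corollary \ref{cor2} and the right-additivity characterization of smoothness from Theorem 4.2 of \cite{J}---you genuinely diverge on the key step showing that $M_{\mathcal{T}}$ is a single sign class. The paper stays inside the orthogonality framework: it picks norming pairs $(x_1,y_1),(x_2,y_2)$ from two distinct classes and produces $\mathcal{A}_1,\mathcal{A}_2$ with $\mathcal{T}(x_i,y_i)\perp_B \mathcal{A}_i(x_i,y_i)$ and $\mathcal{T}=\mathcal{A}_1+\mathcal{A}_2$; Theorem \ref{orthogonality} then gives $\mathcal{T}\perp_B\mathcal{A}_i$ (a vector Birkhoff-James orthogonal to $\mathcal{T}(x_i,y_i)$ lies in both $\mathcal{T}(x_i,y_i)^{+}$ and $\mathcal{T}(x_i,y_i)^{-}$, so a single point furnishes both witnesses), while $\mathcal{T}\not\perp_B\mathcal{A}_1+\mathcal{A}_2=\mathcal{T}$ since $\mathcal{T}\neq 0$. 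Note that this trick of forcing the sum to equal $\mathcal{T}$ itself dissolves the difficulty you flagged: no control at further classes of $M_{\mathcal{T}}$ is needed because $\mathcal{T}$ is never orthogonal to itself; your $2t_i$/$-t_i$ variant does suffer from that problem, but the paper's construction does not. Your alternative---exhibiting two distinct norm-one supporting functionals $\mathcal{A}\mapsto\phi_i(\mathcal{A}(x_i,y_i))$---is equally valid and appeals directly to the unique-supporting-hyperplane definition of smoothness, bypassing Theorem \ref{orthogonality} for this step. Moreover, your tensorial observation is a genuine gain in rigor: the paper justifies the existence of $\mathcal{A}_1,\mathcal{A}_2$ by extending $\{(x_1,y_1),(x_2,y_2)\}$ to a linear basis of $\mathbb{X}\times\mathbb{Y}$, but a bilinear operator cannot be freely prescribed on such a basis (for instance, $(x,y)$ and $(x,-y)$ are linearly independent in $\mathbb{X}\times\mathbb{Y}$, yet every bilinear $\mathcal{A}$ satisfies $\mathcal{A}(x,-y)=-\mathcal{A}(x,y)$); what is actually needed is precisely your statement that $x_1\otimes y_1$ and $x_2\otimes y_2$ are linearly independent in $\mathbb{X}\otimes\mathbb{Y}$ whenever the unit pairs lie in different sign classes. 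Both arguments ultimately rest on this fact, which you make explicit and the paper glosses over.
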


\begin{proof}
Let us first prove the sufficient part of the theorem. Applying Theorem $ 4.2 $ of \cite{J}, it is enough to show that Birkhoff-James orthogonality is right additive at $ \mathcal{T}. $ Let $ \mathcal{A}_1, \mathcal{A}_2 \in \mathscr{B}(\mathbb{X} \times \mathbb{Y}, \mathbb{Z}) $ be such that $ \mathcal{T} \perp_B \mathcal{A}_1 $ and $ \mathcal{T} \perp_B \mathcal{A}_2. $ It follows from Corollary \ref{cor2} that $ \mathcal{T}(x_0,y_0) \perp_B \mathcal{A}_1(x_0,y_0) $ and $ \mathcal{T}(x_0,y_0) \perp_B \mathcal{A}_2(x_0,y_0). $ Since $ \mathcal{T}(x_0,y_0) $ is a smooth point in $ \mathbb{Z}, $ we have that $ \mathcal{T}(x_0,y_0) \perp_B (\mathcal{A}_1+\mathcal{A}_2)(x_0,y_0). $ Now, Theorem \ref{orthogonality} implies that $ \mathcal{T} \perp_B (\mathcal{A}_1+\mathcal{A}_2). $ This completes the proof of the sufficient part of the theorem. Let us now prove the necessary part of the theorem. Let $ \mathcal{T} $ be a smooth point in $ \mathscr{B}(\mathbb{X} \times \mathbb{Y}, \mathbb{Z}). $ If possible, suppose that $ M_{\mathcal{T}} \neq \{ (\pm x_0,\pm y_0) \} $ for any $ (x_0,y_0) \in S_{\mathbb{X} \times \mathbb{Y}}. $ Since $ M_{\mathcal{T}} \neq \emptyset,  $ this implies that there exist $ (x_1,y_1), (x_2,y_2) \in M_{\mathcal{T}} $ such that either $ x_2 \neq \pm x_1 $ or $ y_2 \neq \pm y_1. $ We note that it follows from the proof of Corollary \ref{cor1} that $ \|x_i\|=\| y_i \|=1, $ where $ i=1,2. $ Using this, it is easy to verify that $ \{ (x_1,y_1),(x_2,y_2) \} $ is linearly independent in $ \mathbb{X} \times \mathbb{Y}. $ We extend it to a basis $ \{ (x_1,y_1),(x_2,y_2),(v_i,w_i) : i=3,\ldots,n \} $ of $ \mathbb{X} \times \mathbb{Y}, $ where $ \dim (\mathbb{X} \times \mathbb{Y}) = n. $ It is now easy to define two bilinear operators $ \mathcal{A}_1,\mathcal{A}_2 \in \mathscr{B}(\mathbb{X} \times \mathbb{Y}, \mathbb{Z}) $ such that the following conditions are satisfied:\\

(i) $ \mathcal{T}(x_1,y_1) \perp_B \mathcal{A}_1(x_1,y_1), $ \enspace (ii) $ \mathcal{T}(x_2,y_2) \perp_B \mathcal{A}_2(x_2,y_2), $ \enspace (iii) $ \mathcal{T} = \mathcal{A}_1+\mathcal{A}_2. $\\

It follows from Theorem \ref{orthogonality} that $ \mathcal{T} \perp_B \mathcal{A}_1 $ and $ \mathcal{T} \perp_B \mathcal{A}_2. $ However, as $ \mathcal{T} $ is non-zero, we have that $ \mathcal{T} \not\perp_B (\mathcal{A}_1+\mathcal{A}_2). $ This contradicts our hypothesis that $ \mathcal{T} $ is  smooth in $ \mathscr{B}(\mathbb{X} \times \mathbb{Y}, \mathbb{Z}). $ This completes the proof of the fact that there exists $ (x_0,y_0) \in S_{\mathbb{X} \times \mathbb{Y}} $ such that $ M_{\mathcal{T}} = \{(\pm x_0,\pm y_0)\}. $ Our next objective is to show that $ \mathcal{T}(x_0,y_0) $ is a smooth point in $ \mathbb{Z}. $ Since $ \mathcal{T} $ is non-zero and $ (x_0,y_0) \in M_{\mathcal{T}}, $ it follows that $ \mathcal{T}(x_0,y_0) $ is non-zero. If possible, suppose that $ \mathcal{T}(x_0,y_0) $ is not a smooth point in $ \mathbb{Z}. $ Then there exists $ z_1, z_2 \in \mathbb{Z} $ such that $ \mathcal{T}(x_0,y_0) \perp_B z_1, $ $ \mathcal{T}(x_0,y_0) \perp_B z_2 $ but $ \mathcal{T}(x_0,y_0) \not\perp_B (z_1+z_2). $ Let us consider any $ \mathcal{A}_3,\mathcal{A}_4 \in \mathscr{B}(\mathbb{X} \times \mathbb{Y}, \mathbb{Z}) $ such that (i) $ \mathcal{A}_3(x_0,y_0) = z_1, $ \enspace (ii) $ \mathcal{A}_4(x_0,y_0) = z_2. $ It follows from Theorem \ref{orthogonality} and Corollary \ref{cor2} that $ \mathcal{T} \perp_B \mathcal{A}_3, \mathcal{T} \perp_B \mathcal{A}_4 $ but $ \mathcal{T} \not\perp_B (\mathcal{A}_1+\mathcal{A}_2), $ which contradicts the smoothness of $ \mathcal{T}. $ This proves that $ \mathcal{T}(x_0,y_0) $ must be a smooth point in $ \mathbb{Z} $ and establishes the theorem.
\end{proof}

Theorem \ref{smooth2} allows us to identify smooth bilinear operators between finite-dimensional Banach spaces. In the following example, we construct such a smooth bilinear operator in $ \mathscr{B}(\ell_{\infty}^{2} \times \ell_{\infty}^{2}, \ell_{\infty}^{2}), $ that serves as a ready application of theorem \ref{smooth2}. 

\begin{example} \label{ex1}
Let $ \mathcal{T}: \ell_{\infty}^{2} \times \ell_{\infty}^{2} \longrightarrow \ell_{\infty}^{2} $ be a bilinear operator given by the following conditions:\\

\noindent $ \mathcal{T}((1,1),(1,1))=(1,0), $ \\
$ \mathcal{T}((1,1),(1,-1))=\mathcal{T}((1,-1),(1,1))=\mathcal{T}((1,-1),(1,-1))=(0,0). $\\

It is easy to check that $ M_{\mathcal{T}} = \{ (\pm (1,1), \pm(1,1)) \}. $ Since $ \mathcal{T}((1,1),(1,1))=(1,0) $ is a smooth point in $ \ell_{\infty}^{2}, $ it follows from Theorem \ref{smooth2} that $ \mathcal{T} $ is smooth in $ \mathscr{B}(\ell_{\infty}^{2} \times \ell_{\infty}^{2}, \ell_{\infty}^{2}). $
\end{example}

We now study the norm attainment set of a bounded bilinear operator between Banach spaces. Recently, a complete characterization of the norm attainment set of a bounded linear operator between Banach spaces has been obtained in \cite{Sb}, by using the concept of semi-inner-product (s.i.p.) in Banach spaces. We illustrate in the next theorem that the same approach works in the case of bounded bilinear operators between Banach spaces as well.
 
\begin{theorem}\label{norm attainment}
Let $ \mathbb{X},\mathbb{Y}, \mathbb{Z} $ be  Banach spaces and Let $ \mathcal{T} \in \mathscr{B}(\mathbb{X} \times \mathbb{Y}, \mathbb{Z}). $  Let $ (x_0,y_0) \in S_{\mathbb{X} \times \mathbb{Y}}. $ Then $ (x_0,y_0) \in M_{\mathcal{T}} $ if and only if the following two conditions hold true:\\

(i) $ \| x \| = \| y \| = 1, $\\

(ii) there exist s.i.p. $ [ ~,~ ]_{\mathbb{X}},  [ ~,~ ]_{\mathbb{Y}} $ on $ \mathbb{X}, \mathbb{Y} $ respectively and s.i.p. $ [~,~]_1, [~,~]_2 $ on $ \mathbb{Z} $ such that the following holds true for every $ x \in \mathbb{X} $ and for every $ y \in \mathbb{Y}: $  
\[ [ \mathcal{T}(x_0,y), \mathcal{T}(x_0,y_0)]_{1} + [ \mathcal{T}(x,y_0), \mathcal{T}(x_0,y_0)]_{2} = \| \mathcal{T} \|^{2} ( [x,x_0]_{\mathbb{X}} + [y,y_0]_{\mathbb{Y}}). \]
\end{theorem}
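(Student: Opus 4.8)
The plan is to treat the two implications separately, dispatching the (comparatively trivial) sufficiency by direct substitution and then reducing the necessity to the already-understood linear norm-attainment situation via the two ``slice'' operators obtained by freezing one argument of $ \mathcal{T}. $ For sufficiency, assume (i) and (ii). Condition (i) gives $ \| (x_0,y_0) \| = \max\{\|x_0\|,\|y_0\|\} = 1, $ so $ (x_0,y_0) \in S_{\mathbb{X}\times\mathbb{Y}}. $ Putting $ x = x_0 $ and $ y = y_0 $ in the displayed identity and using that each of $ [~,~]_1, [~,~]_2, [~,~]_{\mathbb{X}}, [~,~]_{\mathbb{Y}} $ is compatible with its norm (so $ [w,w]_{\bullet} = \|w\|^2 $) collapses the left-hand side to $ 2\|\mathcal{T}(x_0,y_0)\|^2 $ and the right-hand side to $ \|\mathcal{T}\|^2(\|x_0\|^2+\|y_0\|^2) = 2\|\mathcal{T}\|^2. $ Hence $ \|\mathcal{T}(x_0,y_0)\| = \|\mathcal{T}\|, $ i.e. $ (x_0,y_0) \in M_{\mathcal{T}}. $

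For necessity, suppose $ (x_0,y_0) \in M_{\mathcal{T}}. $ The argument in the proof of Corollary \ref{cor1} already forces $ \|x_0\| = \|y_0\| = 1, $ which is (i); write $ z_0 := \mathcal{T}(x_0,y_0), $ so that $ \|z_0\| = \|\mathcal{T}\|. $ Next I would introduce the bounded linear slice operators $ P:\mathbb{Y}\to\mathbb{Z}, $ $ P(y) := \mathcal{T}(x_0,y), $ and $ Q:\mathbb{X}\to\mathbb{Z}, $ $ Q(x) := \mathcal{T}(x,y_0), $ noting $ P(y_0) = Q(x_0) = z_0. $ Since $ \|x_0\| = \|y_0\| = 1, $ one checks $ \|P\| \le \|\mathcal{T}\| $ and $ \|Q\| \le \|\mathcal{T}\|, $ while $ \|P(y_0)\| = \|Q(x_0)\| = \|z_0\| = \|\mathcal{T}\| $ forces $ \|P\| = \|Q\| = \|\mathcal{T}\| $ with $ y_0 \in M_P $ and $ x_0 \in M_Q. $ Thus the problem splits into two copies of the linear norm-attainment situation sharing the common value $ z_0. $

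Now I would realize the semi-inner-products. Fix $ g \in \mathbb{Z}^* $ with $ \|g\| = 1 $ and $ g(z_0) = \|z_0\| = \|\mathcal{T}\|. $ Pulling back, set $ f_{\mathbb{X}} := g\circ Q \in \mathbb{X}^* $ and $ f_{\mathbb{Y}} := g\circ P \in \mathbb{Y}^*; $ then $ f_{\mathbb{X}}(x_0) = f_{\mathbb{Y}}(y_0) = \|\mathcal{T}\| $ and $ \|f_{\mathbb{X}}\|, \|f_{\mathbb{Y}}\| \le \|\mathcal{T}\|, $ so $ f_{\mathbb{X}}/\|\mathcal{T}\| $ and $ f_{\mathbb{Y}}/\|\mathcal{T}\| $ are unit norming functionals at $ x_0 $ and $ y_0 $ respectively. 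Invoking the freedom in the Giles--Lumer construction of a compatible s.i.p. \cite{G,L} --- at a single fixed point one may prescribe any norming functional in the second slot --- I would choose compatible s.i.p.'s with $ [x,x_0]_{\mathbb{X}} = f_{\mathbb{X}}(x)/\|\mathcal{T}\|, $ $ [y,y_0]_{\mathbb{Y}} = f_{\mathbb{Y}}(y)/\|\mathcal{T}\|, $ and $ [z,z_0]_1 = [z,z_0]_2 = \|\mathcal{T}\|\, g(z) $ (taking $ [~,~]_1 = [~,~]_2 $ is permissible). Each choice respects compatibility at the base point, e.g. $ [x_0,x_0]_{\mathbb{X}} = 1 = \|x_0\|^2 $ and $ [z_0,z_0]_1 = \|\mathcal{T}\|^2 = \|z_0\|^2. $ Substituting into the displayed identity and using $ g\circ P = f_{\mathbb{Y}} $ and $ g\circ Q = f_{\mathbb{X}}, $ both sides reduce to $ \|\mathcal{T}\|\big(f_{\mathbb{X}}(x) + f_{\mathbb{Y}}(y)\big) $ for all $ x,y, $ which verifies (ii).

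The substantive point --- and the only place requiring care --- is this realizability step: one must know that a compatible s.i.p. can be built taking a preassigned norming functional in its second argument at one prescribed point, which is precisely the flexibility afforded by choosing norming functionals pointwise (unique at smooth points, free otherwise) in the standard construction. Everything else is either the reduction to the two slice operators or direct computation; in particular the identity only ever constrains the s.i.p.'s at the fixed points $ x_0, $ $ y_0, $ $ z_0, $ so there is no tension among the four simultaneously imposed normalizations.
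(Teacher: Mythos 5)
Your proof is correct, and its skeleton coincides with the paper's: sufficiency by substituting $ x = x_0, $ $ y = y_0 $ and using compatibility of the s.i.p.'s, and necessity by freezing one argument to produce the two bounded linear slice operators $ y \mapsto \mathcal{T}(x_0,y) $ and $ x \mapsto \mathcal{T}(x,y_0), $ each of which has norm $ \| \mathcal{T} \| $ and attains it at $ y_0, $ respectively $ x_0. $ The divergence is in how the linear situation is then handled: the paper simply invokes Theorem $ 2.2 $ of \cite{Sb} twice, once per slice, and adds the two resulting identities, whereas you re-prove that linear result inline --- fix a Hahn--Banach norming functional $ g $ at $ z_0 = \mathcal{T}(x_0,y_0), $ pull it back through the slices to obtain norming functionals $ f_{\mathbb{X}}/\|\mathcal{T}\| $ and $ f_{\mathbb{Y}}/\|\mathcal{T}\| $ at $ x_0 $ and $ y_0 $ (this is precisely where $ (x_0,y_0) \in M_{\mathcal{T}} $ enters), and then use the freedom in the Giles construction \cite{G} to prescribe these as the second-slot values of compatible s.i.p.'s at the single points $ x_0, y_0, z_0. $ That realizability step, which you rightly flag as the only delicate point, is legitimate: the standard construction selects a norming functional at each point of the unit sphere (one per antipodal pair, extended by homogeneity), so the choice at any one point may be prescribed at will. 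Your route buys two things: it is self-contained, needing only Hahn--Banach rather than the external result \cite{Sb}, and, because both slices are treated with the \emph{same} functional $ g, $ it shows that the two s.i.p.'s $ [~,~]_1 $ and $ [~,~]_2 $ on $ \mathbb{Z} $ may be taken equal --- a refinement that the paper's double application of the black box does not immediately yield; the paper's route is, of course, shorter. One shared caveat: both arguments (yours through the division by $ \| \mathcal{T} \| $ and the choice of $ g $ at $ z_0 \neq 0, $ the paper's through Corollary \ref{cor1}) implicitly assume $ \mathcal{T} \neq 0, $ and indeed for $ \mathcal{T} = 0 $ the necessity of condition (i) genuinely fails (e.g.\ at $ (x_0,0) \in M_{\mathcal{T}} = S_{\mathbb{X} \times \mathbb{Y}} $), so this is a gap in the statement itself rather than in your proof.
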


\begin{proof}
The sufficient part of the theorem is trivially true. Indeed, just by putting $ x=x_0 $ and $ y=y_0, $ we obtain that $ 2 \| \mathcal{T}(x_0,y_0) \|^{2} = \| \mathcal{T} \|^{2} (\|x\|^2+\|y\|^2) = 2 \| \mathcal{T} \|^{2}. $ This is clearly equivalent to  $ (x_0,y_0) \in M_{\mathcal{T}}.  $ Let us now prove the necessary part of the theorem. We have already observed that since $ (x_0,y_0) \in M_{\mathcal{T}}, $ we must have that $ \| x_0 \|=\| y_0 \| = 1. $ In order to prove the next statement, let us define a bounded linear operator $ \mathcal{T}_{x_{0}}: \mathbb{Y} \longrightarrow \mathbb{Z} $ given by
\[ \mathcal{T}_{x_{0}} (y) = \mathcal{T} (x_0,y)~ \textrm{for each}~ y \in \mathbb{Y}. \]

It is easy to see that $ \| \mathcal{T}_{x_{0}} \| = \| \mathcal{T} \| $ and $ y_0 \in M_{\mathcal{T}_{x_{0}}} = \{ y \in S_{\mathbb{Y}} : \| \mathcal{T}_{x_{0}} (y) \| = \| \mathcal{T}_{x_{0}} \| \}. $ Therefore, by applying Theorem $ 2.2 $ of \cite{Sb}, we deduce that there exists two s.i.p. $ [~,~]_{\mathbb{Y}}, [~,~]_1 $ on $ \mathbb{Y} $ and $ \mathbb{Z} $ respectively such that $ [ \mathcal{T}_{x_{0}} (y), \mathcal{T}_{x_{0}} (y_0) ]_{1} = \| \mathcal{T}_{x_{0}} \|^{2} [ y,y_0 ]_{\mathbb{Y}} $ for each $ y \in \mathbb{Y}. $ However, this gives us that $ [ \mathcal{T}(x_0,y),\mathcal{T}(x_0,y_0) ]_{1} = \| \mathcal{T} \|^{2} [ y,y_0 ]_{\mathbb{Y}} $ for each $ y \in \mathbb{Y}. $ Similarly considering a bounded linear operator $ \mathcal{T}_{y_{0}}: \mathbb{X} \longrightarrow \mathbb{Z} $ given by $ \mathcal{T}_{y_{0}} (x) = \mathcal{T} (x,y_0)~ \textrm{for each}~ x \in \mathbb{X},  $ we deduce that there exists two s.i.p. $ [~,~]_{\mathbb{X}}, [~,~]_2 $ on $ \mathbb{X} $ and $ \mathbb{Z} $ respectively such that $ [ \mathcal{T}(x,y_0),\mathcal{T}(x_0,y_0) ]_{2} = \| \mathcal{T} \|^{2} [ x,x_0 ]_{\mathbb{X}} $ for each $ x \in \mathbb{X}.  $ The proof of the theorem can now be completed by adding these two relations. This establishes the theorem.
\end{proof}

Although theorem \ref{norm attainment} completely characterizes the norm attainment set of a bounded bilinear operator between Banach spaces, it is particularly useful when the concerned Banach spaces are smooth. This is because of the fact that in a smooth Banach space, there exists a unique s.i.p. on the space. It is therefore befitting that we end the present article with the following theorem whose proof is immediate from Theorem \ref{norm attainment} and the defining properties of s.i.p. 

\begin{theorem}\label{norm attainment2}
Let $ \mathbb{X},\mathbb{Y}, \mathbb{Z} $ be smooth Banach spaces and Let $ \mathcal{T} \in \mathscr{B}(\mathbb{X} \times \mathbb{Y}, \mathbb{Z}). $ Let $ [~,~]_{\mathbb{X}}, [~,~]_{\mathbb{Y}}, [~,~]_{\mathbb{Z}} $ be the unique s.i.p. on $ \mathbb{X}, \mathbb{Y}, \mathbb{Z} $ respectively. Let $ (x_0,y_0) \in S_{\mathbb{X} \times \mathbb{Y}}. $ Then $ (x_0,y_0) \in M_{\mathcal{T}} $ if and only if the following two conditions hold true:\\

(i) $ \| x \| = \| y \| = 1, $\\

(ii) for every $ x \in \mathbb{X} $ and for every $ y \in \mathbb{Y}: $  
\[ [ \mathcal{T}(x_0,y) + \mathcal{T}(x,y_0) , \mathcal{T}(x_0,y_0)]_{\mathbb{Z}}  = \| \mathcal{T} \|^{2} ( [x,x_0]_{\mathbb{X}} + [y,y_0]_{\mathbb{Y}}). \]
\end{theorem}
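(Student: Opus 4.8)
The plan is to derive this theorem directly from Theorem \ref{norm attainment}, the only additional ingredient being the well-known fact—already recorded in the discussion preceding the statement—that a smooth Banach space carries a unique compatible s.i.p. (equivalently, that the s.i.p. is pinned down by the G\^ateaux derivative of the norm). This uniqueness is exactly what collapses the existential quantifiers over s.i.p.'s appearing in Theorem \ref{norm attainment}, and it is the one non-routine point in the whole argument.

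For the necessity direction, I would begin by assuming $ (x_0,y_0) \in M_{\mathcal{T}} $ and invoking Theorem \ref{norm attainment}. This immediately yields condition (i), namely $ \|x_0\| = \|y_0\| = 1 $, and produces s.i.p.'s $ [~,~]_{\mathbb{X}}, [~,~]_{\mathbb{Y}} $ on $ \mathbb{X}, \mathbb{Y} $ together with s.i.p.'s $ [~,~]_1, [~,~]_2 $ on $ \mathbb{Z} $ for which
\[ [\mathcal{T}(x_0,y),\mathcal{T}(x_0,y_0)]_1 + [\mathcal{T}(x,y_0),\mathcal{T}(x_0,y_0)]_2 = \|\mathcal{T}\|^2\big([x,x_0]_{\mathbb{X}} + [y,y_0]_{\mathbb{Y}}\big) \]
holds for all $ x \in \mathbb{X} $ and $ y \in \mathbb{Y} $. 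Since $ \mathbb{X} $ and $ \mathbb{Y} $ are smooth, the s.i.p.'s appearing on the right-hand side must be the unique s.i.p.'s on those spaces; and since $ \mathbb{Z} $ is smooth, both $ [~,~]_1 $ and $ [~,~]_2 $ are forced to coincide with the unique s.i.p. $ [~,~]_{\mathbb{Z}} $ on $ \mathbb{Z} $. After this identification I would apply the additivity of the s.i.p. in its first argument (property (a)) to merge the two terms on the left into $ [\mathcal{T}(x_0,y)+\mathcal{T}(x,y_0),\mathcal{T}(x_0,y_0)]_{\mathbb{Z}} $, which is precisely condition (ii).

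For the sufficiency direction, I would run this reasoning in reverse: given (i) and (ii), splitting the left-hand side of the identity in (ii) via property (a) recovers verbatim the hypothesis of Theorem \ref{norm attainment} with every s.i.p. on $ \mathbb{Z} $ taken equal to $ [~,~]_{\mathbb{Z}} $, so that $ (x_0,y_0) \in M_{\mathcal{T}} $. Even more directly, substituting $ x = x_0 $ and $ y = y_0 $ into (ii) and using compatibility of the s.i.p. with the norm gives $ 2[\mathcal{T}(x_0,y_0),\mathcal{T}(x_0,y_0)]_{\mathbb{Z}} = \|\mathcal{T}\|^2(\|x_0\|^2+\|y_0\|^2) = 2\|\mathcal{T}\|^2 $, that is $ \|\mathcal{T}(x_0,y_0)\| = \|\mathcal{T}\| $, yielding norm attainment at once. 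I do not expect any genuine obstacle here: the single delicate step is the passage from the existential s.i.p.'s of Theorem \ref{norm attainment} to the canonical ones, and that rests entirely on the smoothness--uniqueness correspondence that the paper has already flagged. Everything else is a mechanical application of the additivity axiom.
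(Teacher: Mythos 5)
Your proposal is correct and takes essentially the same approach as the paper: the paper gives no separate argument for this theorem, declaring its proof immediate from Theorem \ref{norm attainment} together with the uniqueness of the compatible s.i.p. on a smooth Banach space and the additivity (property (a)) of a s.i.p., which is exactly the reduction you carry out. Your alternative direct check of sufficiency via the substitution $x = x_0$, $y = y_0$ is also sound and mirrors how the paper handles the sufficiency part of Theorem \ref{norm attainment} itself.
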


\bibliographystyle{amsplain}

\end{document}